\newtheorem{theorem}{Theorem}[section]
\newtheorem{conjecture}[theorem]{Conjecture}
\newtheorem{lemma}[theorem]{Lemma}
\newtheorem{notation}{Notation}[section]
\theoremstyle{remark}
\numberwithin{equation}{section}
\newcommand{\gfrak}{\mathfrak{g}}
\newcommand{\Lcal}{\mathscr{L}}
\newcommand{\Mcal}{\mathscr{M}}
\newcommand{\Pro}{\mathbb{P}}
\newcommand{\Z}{\mathbb{Z}}
\newcommand{\C}{\mathbb{C}}
\newcommand{\Q}{\mathbb{Q}}
\newcommand{\R}{\mathbb{R}}
\newcommand{\rk}{\mathrm{rank}\,}
  \DeclareFontFamily{U}{wncy}{}
    \DeclareFontShape{U}{wncy}{m}{n}{<->wncyr10}{}
    \DeclareSymbolFont{mcy}{U}{wncy}{m}{n}
    \DeclareMathSymbol{\Sha}{\mathord}{mcy}{"58}
\begin{document}
\title[Intersecting the torsion of elliptic curves]{Intersecting the torsion of elliptic curves}

\author{Natalia Garcia-Fritz}
\address{ Departamento de Matem\'aticas,
Pontificia Universidad Cat\'olica de Chile.
Facultad de Matem\'aticas,
4860 Av.\ Vicu\~na Mackenna,
Macul, RM, Chile}
\email[N. Garcia-Fritz]{natalia.garcia@uc.cl}%

\author{Hector Pasten}
\address{ Departamento de Matem\'aticas,
Pontificia Universidad Cat\'olica de Chile.
Facultad de Matem\'aticas,
4860 Av.\ Vicu\~na Mackenna,
Macul, RM, Chile}
\email[H. Pasten]{hector.pasten@uc.cl}%

\thanks{N.G.-F. was supported by ANID Fondecyt Regular grant 1211004 from Chile. H.P. was supported by ANID Fondecyt Regular grant 1230507 from Chile.}
\date{\today}
\subjclass[2020]{Primary: 11G05; Secondary: 14G25, 14H52} %
\keywords{Elliptic curves, torsion, ranks, Mordell--Lang, Manin--Mumford, uniformity}%

\begin{abstract} In 2007, Bogomolov and Tschinkel proved that given two complex elliptic curves $E_1$ and $E_2$ along with even degree-$2$ maps $\pi_j\colon E_j\to \mathbb{P}^1$ having different branch loci, the intersection of the image of the torsion points of $E_1$ and $E_2$ under their respective $\pi_j$ is finite. They conjectured (also in works with Fu) that the cardinality of this intersection is uniformly bounded independently of the elliptic curves. As it has been observed in the literature, the recent proof of the Uniform Manin--Mumford conjecture implies a full solution of the Bogomolov--Fu--Tschinkel conjecture. In this work we prove a generalization of the Bogomolov--Fu--Tschinkel conjecture where instead of even degree-$2$ maps one can use any rational functions of bounded degree on the elliptic curves as long as they have different branch loci. Our approach combines Nevanlinna theory with the Uniform Manin--Mumford conjecture. With similar techniques, we also prove a result on lower bounds for ranks of elliptic curves over number fields.
\end{abstract}

\maketitle



\section{Introduction} 

For an elliptic curve $E$ over $\C$ we let $E[\infty]$ be the set of all its torsion points. A morphism $f\colon E\to \Pro^1$ is said to be \emph{even} if for all $P$ in $E$ we have $f(-P)=f(P)$. In 2007, Bogomolov and Tschinkel \cite{BT} used Raynaud's theorem (the Manin--Mumford conjecture) to prove the following:

\begin{theorem}[Bogomolov--Tschinkel] Let $E_1$ and $E_2$ be complex elliptic curves. For each $i=1,2$ let $\pi_j\colon E_j\to \Pro^1$ be a degree-$2$ morphism which is even, and suppose that the branch loci of $\pi_1$ and $\pi_2$ in $\Pro^1$ are different. Then $\pi_1(E_1[\infty])\cap \pi_2(E_2[\infty])$ is finite. 
\end{theorem}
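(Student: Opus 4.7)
The plan is to reduce the statement to a finiteness question about torsion points of $E_1\times E_2$ lying on a suitable curve, and then combine Raynaud's theorem (the Manin--Mumford conjecture) with a genus computation.

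Introduce the fiber product
$$
C \;=\; \{\,(P_1,P_2)\in E_1\times E_2 \,:\, \pi_1(P_1)=\pi_2(P_2)\,\},
$$
a closed subvariety of $E_1\times E_2$ of pure dimension $1$. The set $S:=C\cap (E_1\times E_2)[\infty]$ projects to $\pi_1(E_1[\infty])\cap \pi_2(E_2[\infty])$ with fibers of size at most $2\cdot 2=4$, so it suffices to show that $S$ is finite.

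I would next show that $C$ is irreducible and compute its geometric genus. Since each $\pi_j$ is even of degree $2$ it factors through the quotient $E_j\to E_j/\{\pm 1\}\simeq \Pro^1$; choosing Weierstrass coordinates gives $y_j^2=f_j(x)$ with $f_j\in \C[x]$ a squarefree cubic and $\pi_j(x,y_j)=x$. Then $C\to \Pro^1$ is the $(\Z/2)^2$-cover obtained by adjoining $\sqrt{f_1}$ and $\sqrt{f_2}$ to $\C(x)$, which is irreducible iff none of $f_1,f_2,f_1f_2$ is a square in $\C(x)$; the first two are manifestly not squares, and $f_1f_2$ is a square iff the zero sets of $f_1$ and $f_2$ coincide, that is, iff the branch loci $B_1,B_2$ of $\pi_1,\pi_2$ agree, contradicting the hypothesis. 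For the genus, apply Riemann--Hurwitz to the degree-$4$ map from the normalization $\tilde C$ to $\Pro^1$: above a point of $B_1\triangle B_2$ the curve $C$ is smooth with $2$ points of ramification index $2$, while above a point of $B_1\cap B_2$, where $C$ has a node (locally $\{u^2=v^2\}$), the normalization yields $2$ smooth points each of ramification index $2$. Summing,
$$
2g(\tilde C)-2 \;=\; -8 + 2\,|B_1\cup B_2|,
$$
so $g(\tilde C)=|B_1\cup B_2|-3\geq 5-3=2$, using that $B_1\neq B_2$ and $|B_j|=4$.

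Finally, apply Raynaud's theorem. If $S$ were infinite, it would be Zariski dense in the irreducible curve $C$, and Raynaud's theorem would force $C$ to be a torsion translate of a positive-dimensional abelian subvariety of $E_1\times E_2$; since $\dim C=1$, such a subvariety would have to be an elliptic curve, making $C$ smooth of genus $1$ and contradicting $g(\tilde C)\geq 2$. The main technical step is the genus computation, and in particular the local analysis at common branch points, which requires passing to the normalization; the hypothesis $B_1\neq B_2$ enters exactly through the resulting bound $|B_1\cup B_2|\geq 5$.
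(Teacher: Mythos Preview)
Your proof is correct and follows exactly the strategy the paper attributes to Bogomolov--Tschinkel: form the fibered curve $C\subseteq E_1\times E_2$, verify it is irreducible of geometric genus at least $2$, and invoke Raynaud's theorem. One small imprecision: writing both $E_j$ as $y_j^2=f_j(x)$ with $f_j$ a squarefree \emph{cubic} and $\pi_j=x$ presupposes that $\infty$ lies in both branch loci, which need not hold; in general $f_j$ may be cubic or quartic depending on whether $\infty\in B_j$, but your Kummer/irreducibility argument and the Riemann--Hurwitz count $2g(\tilde C)-2=-8+2\,|B_1\cup B_2|$ go through unchanged.

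It is worth noting where your argument differs from what the paper itself develops. The paper does not reprove this degree-$2$ statement directly; instead, its Theorem~\ref{ThmGeom} establishes the genus bound for \emph{all} irreducible components of $X$ when $g_1,g_2$ have arbitrary degree, using Nevanlinna theory (the Second Main Theorem together with Lemma~\ref{LemmaGFP}) in place of an explicit Riemann--Hurwitz calculation. Your direct computation is more elementary and gives the exact genus $g(\tilde C)=|B_1\cup B_2|-3$, but it relies on the $(\Z/2\Z)^2$-Galois structure special to even degree-$2$ maps; the paper's analytic approach is what allows the extension to non-Galois covers of arbitrary bounded degree, where $X$ need not even be irreducible.
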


We remark that if $E$ is a complex elliptic curve and $\pi\colon E\to \Pro^1$ is an even degree-$2$ map, its branch locus is precisely $\pi(E[2])\subseteq \Pro^1$. The previous theorem motivated the following conjecture, which seems to have first appeared explicitly in joint works of Bogomolov, Fu, and Tschinkel \cite{BF1,BFT,BT}.

\begin{conjecture}[Bogomolov--Fu--Tschinkel] There is a constant $c$ with the following property. 

For any complex elliptic curves $E_1$ and $E_2$, and even degree-$2$ maps $\pi_j\colon E_j\to \Pro^1$ whose branch loci in $\Pro^1$ are different, we have that $\# \left(\pi_1(E_1[\infty])\cap \pi_2(E_2[\infty])\right)<c$. 
\end{conjecture}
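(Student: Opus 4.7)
The plan is to deduce the conjecture from the Uniform Manin--Mumford theorem, applied to a canonical curve inside the abelian surface $A:=E_1\times E_2$. Write $B_j\subset\Pro^1$ for the branch locus of $\pi_j$; each has $4$ points, and by hypothesis $B_1\neq B_2$. Let $C:=E_1\times_{\Pro^1}E_2$ be the fiber product with respect to $\pi_1,\pi_2$, and let $\widetilde C$ be its normalization. I would first verify that $\widetilde C$ is irreducible: every irreducible component of $C$ has even degree over $\Pro^1$ (since the map factors through either $\pi_j$), so the degree-$4$ cover $C\to\Pro^1$ is either irreducible or splits into two components of degree $2$; in the latter case each component would project birationally to both $E_1$ and $E_2$, producing an isomorphism $E_1\cong E_2$ intertwining $\pi_1$ with $\pi_2$ and forcing $B_1=B_2$.

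Next I would bound the genus and check non-degeneracy. A local analysis at the points above $B_1\cup B_2$ shows that the degree-$2$ map $\widetilde C\to E_1$ is ramified exactly at $\pi_1^{-1}(B_2\setminus B_1)$, with the nodal preimages of $B_1\cap B_2$ separating into pairs of unramified points on $\widetilde C$. Riemann--Hurwitz then yields
\[
g(\widetilde C)=5-|B_1\cap B_2|\in\{2,3,4,5\}.
\]
The two coordinate projections $\widetilde C\to E_i$ are surjective of degree $2$, and any proper abelian subvariety of $A$ has dimension $\leq 1$ (so its translates have geometric genus $\leq 1$); hence the tautological embedding places $\widetilde C$ in $A$ in a position not contained in any translate of a proper abelian subvariety.

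To conclude, for each $x\in\pi_1(E_1[\infty])\cap\pi_2(E_2[\infty])$ I would choose torsion preimages $P_i(x)\in E_i[\infty]$ with $\pi_i(P_i(x))=x$; then $(P_1(x),P_2(x))$ is a torsion point of $A$ lying on $C$, and distinct $x$ yield distinct pairs, so lifting to $\widetilde C$ gives
\[
\#\bigl(\pi_1(E_1[\infty])\cap\pi_2(E_2[\infty])\bigr)\leq\#\bigl(\widetilde C\cap A[\infty]\bigr).
\]
Since $g(\widetilde C)\leq 5$ and $\widetilde C$ is non-degenerate in $A$, the Uniform Manin--Mumford theorem (in the form due to K\"uhne) then yields $\#(\widetilde C\cap A[\infty])\leq c$ for an absolute constant $c$, proving the conjecture. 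The step I expect to be most delicate is the local ramification analysis at points above $B_1\cap B_2$, where $C$ is nodal and normalization resolves each node into two unramified preimages of $\widetilde C\to E_1$; correctly accounting for this is what makes both the irreducibility argument and the genus bound go through cleanly, after which Uniform Manin--Mumford supplies the uniform constant with no further elliptic-curve-specific input.
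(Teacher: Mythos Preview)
Your argument is correct. One small imprecision: when $|B_1\cap B_2|>0$ the fiber product $C$ has nodes, so the normalization map $\widetilde C\to A$ is not an embedding and the expression ``$\widetilde C\cap A[\infty]$'' is a slight abuse. This is harmless: either interpret it as the preimage of $A[\infty]$ under $\widetilde C\to A$ (versions of Uniform Manin--Mumford for non-embedded curves apply), or, more simply, apply the theorem directly to the singular curve $C\subset A$, whose geometric genus is $g(\widetilde C)\ge 2$ and whose degree with respect to $V_1+V_2$ equals $4$. Either way the uniform bound follows.

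The route is genuinely different from the paper's. The paper does not treat the Bogomolov--Fu--Tschinkel conjecture separately but deduces it from Theorem~\ref{ThmA}, whose proof must handle arbitrary non-constant maps $g_j$ of degree $\le d$. In that generality $X=G^{-1}(\Delta)$ can be reducible in uncontrolled ways and no simple parity argument forces irreducibility, nor is there a clean Riemann--Hurwitz count for each component. The paper therefore bypasses both issues: it never proves irreducibility, instead showing via Nevanlinna theory (the Second Main Theorem combined with Lemma~\ref{LemmaGFP}) that \emph{every} component of $X$ has geometric genus $\ge 2$, and then feeds a crude degree bound $\deg X\le 2d^3$ into the version of Uniform Manin--Mumford stated as Theorem~\ref{ThmMM}. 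Your direct approach---irreducibility from the degree-$2$ structure, then an explicit genus formula $g=5-|B_1\cap B_2|$---is more elementary and gives sharper numerics in the degree-$2$ even case (this is essentially the argument attributed to \cite{FS} in the introduction), but it does not extend to higher-degree $g_j$; the Nevanlinna argument is precisely what buys that generalization.
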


A first breakthrough  was obtained in 2020 by DeMarco, Krieger, and Ye \cite{dMKY} when they proved the conjectured uniform bound in the case $E_1$ and $E_2$ are given in Legendre form $y^2=x(x-1)(x-\lambda)$ and $\pi_j$ are the corresponding projections onto the $x$-coordinate. As noted in \cite{FS}, the Bogomolov--Fu--Tschinkel conjecture is now completely solved thanks to the recent proof of the Uniform Manin--Mumford conjecture \cite{DGH, Kuhne, GGK, Yuan}. See also \cite{Poineau} and \cite{dMM} for alternative proofs of the Bogomolov--Fu--Tschinkel conjecture. In this work we prove the following generalization:

\begin{theorem}[Main Theorem for torsion]\label{ThmA} Let $d$ be a positive integer. There is a constant $c_0(d)$ depending only on $d$ which has the following property: 

For any complex elliptic curves $E_1$ and $E_2$ and non-constant morphisms $g_j\colon E_j\to \Pro^1$ of degree $\deg(g_j)\le d$ whose branch loci in $\Pro^1$ are different, we have that  
$$
\# \left(g_1(E_1[\infty])\cap g_2(E_2[\infty])\right)<c_0(d).
$$
\end{theorem}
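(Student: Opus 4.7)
The plan is to translate the problem into a statement about torsion on a curve in the abelian surface $A=E_1\times E_2$ and then apply the Uniform Manin--Mumford theorem. Consider the morphism $g_1\times g_2\colon A\to \Pro^1\times \Pro^1$ and set $C=(g_1\times g_2)^{-1}(\Delta_{\Pro^1})$. If $x=g_1(P_1)=g_2(P_2)$ with $P_j\in E_j[\infty]$, then $(P_1,P_2)\in C\cap A[\infty]$ and it maps to $x$ under $g_1\circ p_1$; hence every element of $g_1(E_1[\infty])\cap g_2(E_2[\infty])$ admits a preimage in $C\cap A[\infty]$, and it suffices to bound $\#(C\cap A[\infty])$ in terms of $d$. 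With respect to the polarization $H=p_1^*[O_1]+p_2^*[O_2]$ on $A$, where $[O_j]$ is the class of the origin on $E_j$, a standard intersection computation gives $C\cdot H=\deg(g_1)+\deg(g_2)\le 2d$, while Riemann--Hurwitz applied to the projections of the irreducible components of $C$ onto $E_1$ and $E_2$ bounds their geometric genera in terms of $d$.

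The key step is to show that no irreducible component $C_0$ of $C$ is a torsion translate of a one-dimensional abelian subvariety of $A$. Up to torsion translation, the only such subvarieties are $\{O_1\}\times E_2$, $E_1\times \{O_2\}$, and (when $E_1$ and $E_2$ are isogenous) graphs of isogenies $\phi\colon E_1\to E_2$. A horizontal or vertical translate contained in $C$ would force one of the $g_j$ to be constant, contradicting the hypothesis. If a torsion translate $\{(P,\phi(P)+t):P\in E_1\}$ of a graph lies in $C$, then $g_1=g_2\circ \tau_t\circ \phi$ as morphisms $E_1\to \Pro^1$; since $\tau_t\circ \phi$ is \'etale, the branch locus of $g_1$ equals that of $g_2$, again contradicting the hypothesis. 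Nevanlinna-theoretic machinery, in the form of a Second Main Theorem on elliptic curves or a ramification comparison for meromorphic maps $\C\to \Pro^1$, should enter precisely here to make the branch-locus comparison quantitative and uniform in the moduli.

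Once these special components are ruled out, the Uniform Manin--Mumford theorem applied to $C\subset A$ yields $\#(C\cap A[\infty])\le c_0(d)$ for a constant depending only on $d$ through the bounds on the degree and geometric genera of the components of $C$; by the initial reduction, the same constant bounds $\#(g_1(E_1[\infty])\cap g_2(E_2[\infty]))$. The main obstacle I anticipate is twofold: first, invoking UMM in a form that is truly uniform for embedded curves of bounded degree in abelian surfaces of varying moduli, handling in particular the possibility that $C$ is reducible with components of very small genus or components lying inside a simple or isogenous factor; second, making the branch-locus obstruction quantitative enough, via Nevanlinna-type ramification estimates, to rule out special components uniformly across all admissible pairs $(E_1,g_1), (E_2,g_2)$ of bounded degree.
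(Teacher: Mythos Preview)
Your overall architecture matches the paper's exactly: form $C=(g_1\times g_2)^{-1}(\Delta)\subset E_1\times E_2$, bound its degree with respect to the ample class $H=V_1+V_2$, show every irreducible component has geometric genus at least $2$, and invoke Uniform Manin--Mumford. Your degree bound $C\cdot H\le d_1+d_2$ is in fact sharper than the paper's $d_1d_2(d_1+d_2)$, and the genus upper bound you mention via Riemann--Hurwitz is not needed (only the degree enters the UMM input). The substantive difference lies in how the genus-$\ge 2$ step is established. The paper argues by contradiction via Nevanlinna theory: a genus-$1$ component is uniformized by $\C$, and playing the Second Main Theorem for $h=g_1\circ\phi_1=g_2\circ\phi_2$ against the exact truncated-counting asymptotics of Lemma~\ref{LemmaGFP} and Riemann--Hurwitz for $g_1$ yields a contradiction, using a single branch value of $g_2$ outside the branch locus of $g_1$. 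Your route is more elementary and, once tightened, already complete: a genus-$1$ component in $E_1\times E_2$ is necessarily a translate of an abelian subvariety (genus $0$ being impossible since neither projection $C_0\to E_j$ can be constant), and your \'etale argument forces equal branch loci. Two small fixes: (i) not every such one-dimensional abelian subvariety is the graph of an isogeny $E_1\to E_2$; rather it is the image of some $(\alpha_1,\alpha_2)\colon E_0\to E_1\times E_2$ with both $\alpha_j$ isogenies, but then $g_1\circ\tau_a\circ\alpha_1=g_2\circ\tau_b\circ\alpha_2$ with both $\tau\circ\alpha_j$ \'etale and the branch-locus conclusion is unchanged; (ii) this already rules out \emph{all} genus-$1$ components, not merely torsion translates, so your subsequent worries are misplaced---there is nothing for Nevanlinna theory to make ``quantitative'' here, and no residual low-genus components to handle. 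The uniformity of UMM for curves of bounded degree in abelian surfaces of varying moduli is exactly Theorem~\ref{ThmMM}, taken from \cite{GGK}.
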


In a similar vein, if instead of torsion points we consider the Mordell--Weil group of elliptic curves over number fields, we obtain the following result:

\begin{theorem}[Main Theorem for ranks]\label{ThmB} Let $d$ be a positive integer. There is a constant $\kappa(d)>0$ depending only on $d$ with the following property:

Let $k$ be a number field and let $E_1$ and $E_2$ be elliptic curves over $k$. Let $g_j\colon E_j\to \Pro^1$ be non-constant morphisms defined over $k$ of degree $\deg(g_j)\le d$ with different branch loci. Then
$$
1+\rk E_1(k) + \rk E_2(k)\ge \kappa(d)\cdot \log \max\left\{1,\#\left(g_1(E_1(k))\cap g_2(E_2(k))\right)\right\}.
$$
\end{theorem}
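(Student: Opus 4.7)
The plan is to reduce the intersection-counting problem to counting rational points on a curve inside the abelian surface $E_1\times E_2$, and then to invoke the uniform Mordell--Lang theorem. Specifically, I would form the closed subscheme
\[
C=(g_1\times g_2)^{-1}(\Delta_{\Pro^1})\subset E_1\times E_2,
\]
where $\Delta_{\Pro^1}\subset\Pro^1\times\Pro^1$ is the diagonal. Each $\alpha\in g_1(E_1(k))\cap g_2(E_2(k))$ arises as $g_1(P)=g_2(Q)$ for some $(P,Q)\in C(k)$, and the resulting map $C(k)\to g_1(E_1(k))\cap g_2(E_2(k))$ has fibres of size at most $\deg(g_1)\deg(g_2)\le d^2$. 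Thus
\[
\#\bigl(g_1(E_1(k))\cap g_2(E_2(k))\bigr)\le d^2\cdot \#C(k),
\]
and the task reduces to bounding $\#C(k)$ in terms of $r:=\rk E_1(k)+\rk E_2(k)$.

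Next I would analyse the geometric components of $C$ and use the assumption on branch loci to show each has geometric genus at least $2$. Let $C_0$ be an irreducible component with normalization $\widetilde{C}_0$, and write $\tilde\pi_j\colon \widetilde{C}_0\to E_j$ for the projections. Both are non-constant, for if $\tilde\pi_1$ were constant with image a point $P$ then the identity $g_1\circ\tilde\pi_1=g_2\circ\tilde\pi_2$ would force $\tilde\pi_2(\widetilde{C}_0)$ to be contained in the finite fibre $g_2^{-1}(g_1(P))$, hence $\tilde\pi_2$ would be constant too and $C_0$ would collapse to a point. Since $\Pro^1$ admits no non-constant morphism to an elliptic curve, $\widetilde{C}_0$ cannot have genus $0$. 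If $\widetilde{C}_0$ had genus $1$, then each $\tilde\pi_j$ would be \'etale (being a translate of an isogeny between elliptic curves), so the branch locus on $\Pro^1$ of the common composition $h:=g_1\circ\tilde\pi_1=g_2\circ\tilde\pi_2$ would coincide with both the branch locus of $g_1$ and that of $g_2$, contradicting the hypothesis. Thus every component has genus $\ge 2$. A Riemann--Hurwitz estimate applied to the cover $g_1\times g_2\colon E_1\times E_2\to\Pro^1\times\Pro^1$ also bounds the number of components of $C$ and the geometric genera of their normalizations by a function of $d$ alone.

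I would then apply the uniform Mordell--Lang theorem of \cite{DGH,Kuhne,GGK,Yuan}: for a smooth projective curve $X$ of genus $g\ge 2$ mapping finitely-to-one to an abelian variety $A$ via $f$, and a finitely generated subgroup $\Gamma\subset A(\bar k)$ of rank $\rho$, one has $\#(X(\bar k)\cap f^{-1}(\Gamma))\le c(g)^{1+\rho}$. Applying this to each normalized component $\widetilde{C}_0\to E_1\times E_2$ with $\Gamma=E_1(k)\times E_2(k)$ of rank $r$, and summing over the boundedly many components, yields $\#C(k)\le c_1(d)^{1+r}$ for a constant depending only on $d$. Combining with the estimate from the first step and taking logarithms produces the desired inequality $1+r\ge \kappa(d)\cdot\log\max\{1,\#(g_1(E_1(k))\cap g_2(E_2(k)))\}$ for a suitable $\kappa(d)>0$.

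The main obstacle I anticipate is the clean execution of the genus-one ruling-out step: one must carefully handle possibly non-reduced components of $C$ and the multiplicities that arise when several components of $C$ have the same underlying reduced curve, in order to conclude rigorously that the equality $g_1\circ\tilde\pi_1=g_2\circ\tilde\pi_2$ on an elliptic normalization forces the branch loci of $g_1$ and $g_2$ to coincide. A secondary technical point is the precise form of uniform Mordell--Lang being invoked: the statements in the literature are typically formulated for a curve inside its own Jacobian, so one would either pull back $\Gamma$ through the natural morphism $J(\widetilde{C}_0)\to E_1\times E_2$ (controlling the rank increase coming from the kernel) or invoke one of the extensions of uniform Mordell--Lang to curves embedded in arbitrary abelian varieties.
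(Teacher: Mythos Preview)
Your overall architecture matches the paper's: form the fibre-product curve $C\subset E_1\times E_2$, show every component has geometric genus at least $2$, control the complexity of $C$ in terms of $d$, and apply uniform Mordell--Lang with $\Gamma=E_1(k)\times E_2(k)$. The interesting divergence is in the genus step. The paper proves this (its Theorem~\ref{ThmGeom}) via Nevanlinna theory: it uniformizes a hypothetical genus-$1$ component by $\C$, applies the Second Main Theorem together with a counting-function lemma (Lemma~\ref{LemmaGFP}) for meromorphic functions factoring through an elliptic curve, and derives a contradiction with Riemann--Hurwitz for $g_1$. Your argument is more direct and entirely algebraic: a non-constant map between genus-$1$ curves is \'etale by Riemann--Hurwitz, so the branch locus of $h=g_j\circ\tilde\pi_j$ on $\Pro^1$ equals the branch locus of $g_j$ for each $j$, forcing the two branch loci to coincide. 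This is correct and considerably more elementary; your worry about non-reduced components is not a real obstacle, since only the reduced structure enters both the genus computation and the point count. The paper's Nevanlinna route is the technique the authors developed in \cite{GFP} and presumably generalizes more readily, but for the result as stated your shortcut suffices.

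Two small remarks. First, the inequality $\#\bigl(g_1(E_1(k))\cap g_2(E_2(k))\bigr)\le d^2\cdot\#C(k)$ is true, but the fibre-size reasoning points the wrong way; what you actually use is surjectivity of $C(k)\to g_1(E_1(k))\cap g_2(E_2(k))$, which already gives $\#(\text{intersection})\le\#C(k)$ (the paper writes this as $G(\Gamma)\cap\Delta=G(\Gamma\cap X)$). Second, the paper sidesteps your concern about the formulation of uniform Mordell--Lang by invoking the version from \cite{GGK} for one-dimensional closed subsets of an abelian variety with bounded degree relative to an ample line bundle (stated here as Theorem~\ref{ThmML}), combined with the degree bound $\deg(X)\le 2d^3$ of Lemma~\ref{LemmaDeg}; this avoids passing to Jacobians or separately bounding the number and genera of components.
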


Thus, if two elliptic curves over a number field $k$ have large intersection of the image of their $k$-rational points under rational maps to $\Pro^1$ of fixed degree and different branch loci, then at least one of the two elliptic curves has large rank over $k$. Thus, Theorem \ref{ThmB} is connected to the question of boundedness of ranks of elliptic curves over number fields.

To conclude this introduction let us briefly describe our methods. 

The proof of the Bogomolov--Fu--Tschinkel conjecture applies the Uniform Manin--Mumford conjecture to the curve $X\subseteq E_1\times E_2$ defined by the equation $\pi_1(P_1)=\pi_2(P_2)$ for $(P_1,P_2)\in E_1\times E_2$. For this one checks that $X$ is an irreducible curve of geometric genus at least $2$. 

In the more general setting of Theorem \ref{ThmA} there is no reason for $g_1(P_1)=g_2(P_2)$ to define an irreducible curve in $E_1\times E_2$ and one needs to ensure that all the irreducible components of the resulting algebraic set are curves of geometric genus at least $2$. This is achieved in an indirect way using Nevanlinna theory, for which we review the necessary background in Section \ref{SecNev} ---this approach originates in the authors's proof of Bremner's conjecture \cite{GFP}. In this way we first obtain a purely geometric result in Section \ref{SecGeom} (Theorem \ref{ThmGeom}) from which Theorem \ref{ThmA} is deduced in Section \ref{SecProofs}. Finally, Theorem \ref{ThmB} is also proved in Section \ref{SecProofs} combining our geometric result with the Uniform Mordell--Lang conjecture \cite{DGH, Kuhne, GGK, Yuan} (see Section \ref{SecML}) instead of the Uniform Manin--Mumford conjecture.


\section{Nevanlinna theory} \label{SecNev}

\begin{notation} We will be using Landau's notation $o$. Thus, $o(1)$ represents a function that tends to $0$. In addition, the subscript ``$exc$'' in inequalities and equalities between functions of a variable $r\in \R_{\ge0}$ means that the claimed relation holds for $r$ outside a set of finite measure in $\R_{\ge 0}$.
\end{notation}

 In the first half of the 1920's R.\ Nevanlinna developed a very successful theory to study value distribution of complex meromorphic functions. In this section we recall some basic results of this theory; we refer the reader to \cite{Vojta} for a general reference.

Let $\Mcal$ be the field of (possibly transcendental) complex meromorphic functions on $\C$. Given a non-constant $h\in \Mcal$, a point $\alpha\in \Pro^1(\C)=\C\cup\{\infty\}$, and a real number $r\ge 0$, we define
$$
n_h^{(1)}(\alpha,r)=\#\{ z_0\in \C : |z_0|\le r\mbox{ and }h(z_0)=\alpha\}
$$
where the case $h(z_0)=\infty$ is understood as the condition that $h$ has a pole at $z_0$. The \emph{truncated counting function} $N_h^{(1)}(\alpha,r)$ is then defined as the logarithmic average
$$
N_h^{(1)}(\alpha,r) = \int_{0}^r\left(n_h^{(1)}(\alpha,t)-n_h^{(1)}(\alpha,0)\right)\frac{dt}{t} + n_h^{(1)}(\alpha,0)\log r.
$$

Associated to every $h\in \Mcal$ there is the \emph{Nevanlinna height} (or \emph{characteristic}) function 
$$
T_h\colon \R_{\ge 0}\to \R_{\ge 0}
$$
which has the basic property that it is bounded if $h$ is constant, and otherwise $T_f(r)$ grows to infinity as $r\to \infty$. We recall that the function $T_h(r)$ is only defined up to adding a bounded function. Intuitively, $T_h(r)$ measures the complexity of $h$ restricted to the disk $\{z\in \C: |z|\le r\}$ as $r$ grows.  For our purposes we do not need to recall the precise definition of $T_h(r)$ (which can be found, for instance, in \cite{Vojta}) but instead we simply need its relation to the truncated counting function, which is provided by the \emph{Second Main Theorem} of Nevanlinna theory.

\begin{theorem}[Second Main Theorem]\label{ThmSMT} Let $h\in \Mcal$ be non-constant and let $\alpha_1,\ldots, \alpha_q$ be different points in $\Pro^1(\C)$. Then 
$$
\sum_{j=1}^q N^{(1)}_h(\alpha_j, r)\ge_{exc} (q-2+o(1))T_h(r).
$$
\end{theorem}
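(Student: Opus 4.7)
The plan is to follow Nevanlinna's classical strategy, reducing the estimate to the \emph{Lemma on the Logarithmic Derivative} (LLD). As a preliminary step, after applying a M\"obius transformation of $\Pro^1(\C)$---which changes $T_h$ only by an additive $O(1)$ and preserves the truncated counting functions---I may assume that none of the $\alpha_j$ equals $\infty$. Next, recall the \emph{First Main Theorem}:
$$T_h(r)=m_h(\alpha,r)+N_h(\alpha,r)+O(1),$$
where $m_h(\alpha,r)=\int_0^{2\pi}\log^+|h(re^{i\theta})-\alpha|^{-1}\,d\theta/(2\pi)$ is the proximity function and $N_h(\alpha,r)$ is the \emph{untruncated} counting function. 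Using it to rewrite each $N_h(\alpha_j,r)$, the asserted inequality becomes equivalent to
$$\sum_{j=1}^q m_h(\alpha_j,r)+N_{\mathrm{ram}}(r)\le_{exc} 2\,T_h(r)+o(T_h(r)),$$
where $N_{\mathrm{ram}}(r)=\sum_{\alpha}(N_h(\alpha,r)-N_h^{(1)}(\alpha,r))$ is the ramification counting function.

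The next step introduces the auxiliary meromorphic function $\Phi(z)=\sum_{j=1}^q h'(z)/(h(z)-\alpha_j)$. A pointwise analysis on $|z|=r$, using that the $\alpha_j$ are distinct, shows that at most one summand of $\Phi$ can be large when $h(z)$ is near some $\alpha_k$, whence
$$\sum_{j=1}^q\log^+\frac{1}{|h(z)-\alpha_j|}\le \log^+|\Phi(z)|+\log^+\frac{1}{|h'(z)|}+O(1).$$
Integrating on $|z|=r$ yields $\sum_j m_h(\alpha_j,r)\le m(r,\Phi)+m(r,1/h')+O(1)$. Now LLD---the crucial analytic input---asserts $m(r,h'/(h-\alpha))\le_{exc}o(T_h(r))$ for each $\alpha$, hence $m(r,\Phi)\le_{exc} o(T_h(r))$. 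To bound $m(r,1/h')$, I would combine the First Main Theorem for $h'$ with the identity relating the zeros and poles of $h'$ to those of $h$ and to the ramification divisor, producing
$$m(r,1/h')\le 2\,T_h(r)-N_{\mathrm{ram}}(r)+o(T_h(r));$$
the coefficient $2$ here is a manifestation of $-\chi(\Pro^1)$ and is the source of the ``$q-2$'' in the final estimate. Substituting back gives the target inequality displayed at the end of the first paragraph.

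The main obstacle is LLD itself. While formally it is just a proximity bound for a logarithmic derivative, its sharp form is proved through the Poisson--Jensen representation of $h$, careful $L^1$-estimates for $\log^+|h'/h|$ on circles $|z|=r$, and the Borel growth lemma that forces the exclusion of an $r$-set of finite Lebesgue measure---the origin of the ``$exc$'' subscript. Once LLD is granted, every remaining step is a formal consequence of the First Main Theorem and elementary $\log^+$ inequalities.
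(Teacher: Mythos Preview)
The paper does not supply its own proof of this statement: the Second Main Theorem is quoted as a classical background result from Nevanlinna theory, with a reference to \cite{Vojta}, and is then used as a black box in the proof of Theorem~\ref{ThmGeom}. So there is no ``paper's proof'' to compare against.

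Your sketch follows the standard Nevanlinna route (First Main Theorem, the auxiliary sum $\Phi=\sum_j h'/(h-\alpha_j)$, the Lemma on the Logarithmic Derivative, and the Riemann--Hurwitz-type bookkeeping for $m(r,1/h')$ that produces the constant $2$), and the overall architecture is correct. Two small points worth tightening if you were to write this out in full: first, the displayed ``equivalence'' in your first paragraph is really an implication---the ramification term $N_{\mathrm{ram}}(r)$ you introduce counts ramification over \emph{all} values, whereas the rearrangement via the First Main Theorem only produces $\sum_{j=1}^q\bigl(N_h(\alpha_j,r)-N_h^{(1)}(\alpha_j,r)\bigr)$, which is dominated by $N_{\mathrm{ram}}(r)$; so you are proving a slightly stronger inequality than needed, which is fine but not literally equivalent. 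Second, the pointwise estimate $\sum_j\log^+|h-\alpha_j|^{-1}\le \log^+|\Phi|+\log^+|h'|^{-1}+O(1)$ requires a short case analysis (near a given $\alpha_k$ the remaining summands are $O(1)$, and one then uses $\log^+(uv)\le\log^+u+\log^+v$); as written it is asserted rather than shown. Neither issue is a genuine gap.
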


Besides the previous general result, we need another relation between the Nevanlinna height and the truncated counting function in a special case.

\begin{lemma}(cf.\ Lemma 3.3 in \cite{GFP})\label{LemmaGFP} Let $E$ be a complex elliptic curve and $g\colon E\to \Pro^1$ a non-constant morphism of degree $d$. Let $\phi\colon \C\to E$ be a non-constant holomorphic map and let $\alpha\in \Pro^1(\C)$. Consider the non-constant meromorphic function $h=g\circ \phi\in \Mcal$. Then
$$
N^{(1)}_h(\alpha,r)=_{exc} \left(\frac{\# g^{-1}(\alpha)}{d}+o(1)\right) T_h(r).
$$
\end{lemma}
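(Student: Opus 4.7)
The plan is to reduce the lemma to a single-point estimate on $E$. Set $g^{-1}(\alpha) = \{P_1, \dots, P_s\}$ with $s = \# g^{-1}(\alpha)$. Since the truncated counting function ignores multiplicities, one has the set-theoretic identity $N_h^{(1)}(\alpha, r) = \sum_{j=1}^{s} N_\phi^{(1)}(P_j, r)$. Fix an origin $O \in E$ and let $T_\phi(r)$ denote the Nevanlinna height of $\phi$ with respect to $\Ocal_E(O)$. Because $g^*\Ocal_{\Pro^1}(1)$ has degree $d$ on $E$, the standard height machinery (together with the fact that heights relative to degree-zero line bundles on an abelian variety are $o(T_\phi(r))$) gives $T_h(r) = d\, T_\phi(r) + o(T_\phi(r))$. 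Therefore it suffices to prove the one-point claim: for every $P \in E$,
$$
N_\phi^{(1)}(P, r) =_{exc} T_\phi(r) + o(T_\phi(r)).
$$

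To prove this claim at the four $2$-torsion points $Q_0 = O, Q_1, Q_2, Q_3$ simultaneously, fix the Weierstrass quotient $\pi \colon E \to \Pro^1$, an even degree-$2$ morphism whose branch locus is exactly $\{\pi(Q_j)\}_{j=0}^{3}$; since $\pi^*\Ocal(1) = \Ocal(2O)$ one has $T_f(r) = 2T_\phi(r) + O(1)$ where $f := \pi \circ \phi$. Applying Theorem \ref{ThmSMT} to $f$ with the four branch values produces
$$
\sum_{j=0}^{3} N_f^{(1)}(\pi(Q_j), r) \geq_{exc} (2 + o(1))\, T_f(r) = (4 + o(1))\, T_\phi(r).
$$
Each $Q_j$ is the unique preimage of $\pi(Q_j)$ under $\pi$, so $N_f^{(1)}(\pi(Q_j), r) = N_\phi^{(1)}(Q_j, r)$. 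On the other hand, the First Main Theorem combined with the degree-zero estimate yields the individual upper bound $N_\phi^{(1)}(Q_j, r) \leq T_\phi(r) + o(T_\phi(r))$, and four such upper bounds summing to at least $(4 + o(1)) T_\phi(r)$ force each summand to equal $T_\phi(r) + o(T_\phi(r))$, establishing the claim at the $2$-torsion.

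The general case follows by translation: composing $\phi$ with $\tau_{-P}$ produces a non-constant holomorphic map $\phi_P \colon \C \to E$ with $N_{\phi_P}^{(1)}(O, r) = N_\phi^{(1)}(P, r)$ and $T_{\phi_P}(r) = T_\phi(r) + o(T_\phi(r))$ (again by the degree-zero estimate), so applying the already-proved case at the $2$-torsion point $O$ to $\phi_P$ transfers the identity to $P$ for $\phi$. The main obstacle in this outline is the repeated use of the fact that heights relative to a degree-zero line bundle on $E$ are $o(T_\phi(r))$---the Nevanlinna-theoretic input specific to abelian targets (coming from estimates on theta functions, or Kawamata--Bloch for abelian varieties) that makes the conclusion uniform in $P$ and $\alpha$ with no ramification hypothesis.
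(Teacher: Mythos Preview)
Your argument is correct. The paper does not reprove the lemma here; it refers to \cite{GFP} and remarks that the proof there invokes the Second Main Theorem for holomorphic maps into elliptic curves. That version (using $K_E=0$) gives directly, for any single point $P\in E$, the estimate $N^{(1)}_\phi(P,r)=_{exc}T_{\phi,P}(r)+o(T_\phi(r))$; the lemma then follows by summing over the fibre $g^{-1}(\alpha)$ and comparing degrees. Your route is genuinely different: you stay with the classical $\Pro^1$ version of the Second Main Theorem, bootstrap the one-point estimate at the $2$-torsion via the Weierstrass double cover, and then propagate it to arbitrary $P$ by translation. This is a pleasant reduction, but observe that the degree-zero height estimate you lean on (that $T_{\phi,L}(r)=o(T_\phi(r))$ for $L\in\Pic^0(E)$) is of the same depth as the elliptic-curve Second Main Theorem itself---each can be obtained from the other---so the saving in machinery is more apparent than real. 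The practical virtue of your approach is that it isolates precisely where the abelian-variety input enters, while the paper's approach is shorter because that input is applied once, up front.
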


We remark that our proof of the previous lemma in \cite{GFP} uses the Second Main Theorem for holomorphic maps to elliptic curves rather than the case of meromorphic functions cited above.


\section{Geometric preliminaries} \label{SecGeom}

Let us fix some notation and assumptions for this section. Let $E_1$ and $E_2$ be complex elliptic curves. For $j=1,2$ let $g_j\colon E_j\to \Pro^1$ be a non-constant morphism of degree $d_j$. Suppose that $g_1$ and $g_2$ have different branch locus in $\Pro^1$. Let $X\subseteq E_1\times E_2$ be the $1$-dimensional algebraic set defined by the equation $g_1(P_1)=g_2(P_2)$ on $(P_1,P_2)\in E_1\times E_2$, that is, $X$ is the pre-image of the diagonal $\Delta\subseteq \Pro^1\times \Pro^1$ via the map $G=(g_1,g_2)\colon E_1\times E_2\to \Pro^1\times \Pro^1$.

If $Z\subseteq E_1\times E_2$ is a $1$-dimensional algebraic set, we define its degree $\deg(Z)$ as the intersection number $Z.(V_1+V_2)$ where $Z$ is seen as a reduced divisor and we define  $V_1=\{0\}\times E_2$ and $V_2=E_1\times \{0\}$ where $0$ is the neutral point of the corresponding elliptic curve. Here we remark that the divisor $V_1+V_2$ on $E_1\times E_2$ is ample.

\begin{lemma}\label{LemmaDeg}  We have $\deg(X)\le (d_1+d_2)d_1d_2$.
\end{lemma}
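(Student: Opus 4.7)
The plan is to compute the intersection numbers $X\cdot V_1$ and $X\cdot V_2$ directly using the two projections $p_j\colon E_1\times E_2\to E_j$. First I would check that every irreducible component $C$ of the reduced curve $X$ dominates both $E_1$ and $E_2$: a component on which $p_1$ were constant would be of the form $\{P_1^*\}\times E_2$ for some $P_1^*\in E_1$, but then the defining equation $g_1(P_1)=g_2(P_2)$ of $X$ would force $g_2$ to take the constant value $g_1(P_1^*)$ on all of $E_2$, contradicting that $g_2$ is a non-constant morphism. The argument for $p_2$ is symmetric, so in particular neither $V_1$ nor $V_2$ is an irreducible component of $X$, and the intersection numbers $X\cdot V_j$ are well-defined.

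Writing $X=\sum_i C_i$ as the sum of its distinct irreducible components, we have $V_1=p_1^{-1}(0)$ as a smooth scheme-theoretic fiber of $p_1$, and for each $i$ the restriction $p_1|_{C_i}\colon C_i\to E_1$ is a finite surjective morphism, giving $C_i\cdot V_1=\deg(p_1|_{C_i})$. Summing over $i$ yields
$$
X\cdot V_1 \;=\; \sum_i \deg(p_1|_{C_i}) \;=\; \deg(p_1|_X).
$$
To evaluate $\deg(p_1|_X)$, I would take a generic $P_1\in E_1$: then $g_1(P_1)\in\Pro^1$ avoids the (finite) branch locus of $g_2$, so $g_2^{-1}(g_1(P_1))$ consists of exactly $d_2=\deg(g_2)$ distinct points, each producing one reduced preimage $(P_1,P_2)\in X$. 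Hence $\deg(p_1|_X)=d_2$ and $X\cdot V_1=d_2$. By the symmetric argument, $X\cdot V_2=d_1$, and therefore $\deg(X)=d_1+d_2$, which is $\leq (d_1+d_2)d_1d_2$ since $d_1,d_2\geq 1$.

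The step requiring the most care is the identity $X\cdot V_1=\deg(p_1|_X)$ for the \emph{reduced} curve $X$: the scheme-theoretic pullback $G^{*}\Delta$ under $G=(g_1,g_2)$ may carry ramification multiplicities from $G$ itself, so that $G^{*}\Delta\cdot V_1$ and $X\cdot V_1$ need not coincide a priori. Decomposing into irreducible components of $X$ and applying the projection formula componentwise bypasses this subtlety cleanly, after which the argument is simply the count of distinct points in a generic fiber of $p_1|_X$.
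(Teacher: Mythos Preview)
Your argument is correct and in fact yields the exact value $\deg(X)=d_1+d_2$, which is stronger than the stated inequality. The paper proceeds differently: it bounds $\deg(X)\le\deg(G^*\Delta)$ and then estimates $(G^*\Delta)\cdot(V_1+V_2)$ via the projection formula for $G=(g_1,g_2)$, picking up an extra factor of $d_1d_2=\deg(G)$ from an intermediate inequality of the form $(\,\cdot\,)\le G^*G_*(\,\cdot\,)$; this is how the bound $(d_1+d_2)d_1d_2$ arises. Your route is more elementary and more precise: once vertical components are excluded, the identification $X\cdot V_j=\deg(p_j|_X)$ together with the generic fiber count gives the degree on the nose. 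Either bound suffices for the paper's purposes, since only a bound depending on $d$ is needed to feed into the uniform Manin--Mumford and Mordell--Lang theorems; but with your computation one could replace the degree bound $2d^3$ used in Section~\ref{SecProofs} by $2d$, improving the constants $c_0(d)$ and $\kappa(d)$ accordingly. The care you flag in the last paragraph is well placed: the key point is that the generic fiber of $p_1|_X$ is reduced (automatic here since $X$ is reduced and we work over $\mathbb{C}$), so the set-theoretic count $d_2$ really is the degree.
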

\begin{proof} Let $\Delta\subseteq \Pro^1\times \Pro^1$ be the diagonal and let $L_1=\{p_1\}\times \Pro^1$ and $L_1=\Pro^1\times \{p_2\}$ for a fixed choice of points $p_1,p_2\in \Pro^1$. Then $\Delta$ is linearly equivalent to $L_1+L_2$.

Note that $X=G^{-1}\Delta \le G^*\Delta$ as divisors, and by the projection formula we find
$$
\begin{aligned}
\deg(X)&\le \deg(G^*\Delta)\\
&=(G^*(L_1+L_2)).(V_1+V_2)\\
&\leq G^*G_*(G^*(L_1+L_2).(V_1+V_2))\\
&=G^*((L_1+L_2).G_*(V_1+V_2))\\
&=d_1d_2(L_1+L_2).(d_2\cdot\{g_1(0)\}\times\mathbb{P}^1+d_1\cdot\mathbb{P}^1\times\{g_2(0)\})\\
&=d_1d_2(d_1+d_2).
\end{aligned}
$$ 
\end{proof}

Before we give the main result of this section, it is convenient to recall a version of the Riemann--Hurwitz formula that will be useful in our argument.

\begin{lemma}[Riemann--Hurwitz formula]\label{LemmaRH} Let $Y_1$ and $Y_2$ be smooth projective (complex) curves of genus $\gfrak_1$ and $\gfrak_2$ respectively, and let $g\colon Y_1\to Y_2$ be a non-constant morphism of degree $d$. Let $\alpha_1,\ldots, \alpha_m\in Y_2$ be all the branch values of $g$. Then
$$
2(\gfrak_1-1)=2d(\gfrak_2-1) + \sum_{j=1}^m (d-\#g^{-1}(\alpha_j)). 
$$
\end{lemma}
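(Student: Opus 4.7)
The plan is to derive this statement from the classical Riemann--Hurwitz formula by reorganizing the ramification contribution according to branch values. Recall the standard form of Riemann--Hurwitz for a non-constant morphism $g\colon Y_1\to Y_2$ of degree $d$ between smooth projective curves:
$$
2\gfrak_1-2 = d(2\gfrak_2-2) + \sum_{P\in Y_1}(e_P-1),
$$
where $e_P$ denotes the ramification index of $g$ at $P$. I would take this as given (it is standard, e.g.\ from the relation $K_{Y_1}=g^*K_{Y_2}+R$ where $R$ is the ramification divisor).

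The key observation is then purely combinatorial. For each point $\alpha\in Y_2$ the fiber satisfies $\sum_{P\in g^{-1}(\alpha)} e_P = d$, and therefore
$$
\sum_{P\in g^{-1}(\alpha)}(e_P-1) = d - \#g^{-1}(\alpha).
$$
If $\alpha$ is not a branch value, every $e_P$ in the fiber equals $1$, so $\#g^{-1}(\alpha)=d$ and the fiber contributes zero. Hence the sum $\sum_{P\in Y_1}(e_P-1)$ only picks up contributions at the branch values $\alpha_1,\ldots,\alpha_m$, giving
$$
\sum_{P\in Y_1}(e_P-1) = \sum_{j=1}^m\bigl(d - \#g^{-1}(\alpha_j)\bigr).
$$

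Substituting into the classical Riemann--Hurwitz formula yields precisely $2(\gfrak_1-1) = 2d(\gfrak_2-1) + \sum_{j=1}^m (d - \#g^{-1}(\alpha_j))$, as desired. There is no genuine obstacle here: the only thing to verify is that the fiber identity $\sum_{P\in g^{-1}(\alpha)} e_P = d$ holds, which is a standard property of finite morphisms of smooth curves (the degree equals the sum of ramification indices in any fiber). The lemma is therefore just a convenient repackaging of Riemann--Hurwitz suited to the applications in the sequel, where the quantities $\#g^{-1}(\alpha_j)$ will be compared with outputs of Nevanlinna's Second Main Theorem via Lemma~\ref{LemmaGFP}.
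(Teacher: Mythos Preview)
Your derivation is correct: the stated version is exactly the classical Riemann--Hurwitz formula with the ramification divisor regrouped fiber-by-fiber using $\sum_{P\in g^{-1}(\alpha)} e_P = d$. The paper itself does not prove this lemma---it is simply recalled as a standard fact---so your argument supplies more detail than the paper does, and there is nothing to compare.
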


With this at hand we have:

\begin{theorem}\label{ThmGeom}  Every irreducible component of $X$ is a curve of geometric genus at least $2$.
\end{theorem}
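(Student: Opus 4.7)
The plan is to argue by contradiction: suppose some irreducible component $C \subseteq X$ has geometric genus at most $1$, and derive that the branch loci of $g_1$ and $g_2$ must coincide, contradicting the hypothesis. First I would produce a non-constant holomorphic map $\psi\colon \C \to C$ by composing the normalization $\widetilde{C} \to C$ with either the universal cover $\C \to \widetilde{C}$ (if $\widetilde{C}$ is an elliptic curve) or a non-constant holomorphic inclusion $\C \hookrightarrow \widetilde{C} \cong \Pro^1$ (if $\widetilde{C}$ has genus zero). Setting $\phi_j = \pi_j \circ \psi\colon \C \to E_j$ (with $\pi_j\colon E_1 \times E_2 \to E_j$ the projections), I would verify that both $\phi_j$ are non-constant: if, say, $\phi_1$ were constant at some $P_1 \in E_1$, then the image of $\psi$ would lie in $(\{P_1\} \times E_2) \cap C$, which is finite because $C \neq \{P_1\} \times E_2$ (the alternative would force $g_2$ to be constant equal to $g_1(P_1)$, contradicting $\deg g_2 \geq 1$), and this would contradict $\psi$ being non-constant.

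Next, since $\psi$ lands in $C$, the defining equation of $X$ yields $g_1 \circ \phi_1 = g_2 \circ \phi_2$; call this common meromorphic function $h \in \Mcal$, which is automatically non-constant since each $\phi_j$ is non-constant and each $g_j$ has finite fibers. The main input of the proof is now to apply Lemma~\ref{LemmaGFP} to each of the two representations $h = g_j \circ \phi_j$ simultaneously, giving
$$
\left(\frac{\# g_1^{-1}(\alpha)}{d_1} + o(1)\right) T_h(r) =_{exc} N_h^{(1)}(\alpha, r) =_{exc} \left(\frac{\# g_2^{-1}(\alpha)}{d_2} + o(1)\right) T_h(r)
$$
for every $\alpha \in \Pro^1(\C)$. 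Dividing through by $T_h(r)$, which grows without bound as $r \to \infty$ outside a set of finite measure, forces the pointwise identity
$$
\frac{\# g_1^{-1}(\alpha)}{d_1} = \frac{\# g_2^{-1}(\alpha)}{d_2} \quad \text{for every } \alpha \in \Pro^1(\C).
$$

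To finish, I would observe that any $\alpha$ outside the branch locus $B_1$ of $g_1$ satisfies $\# g_1^{-1}(\alpha) = d_1$, so the identity forces $\# g_2^{-1}(\alpha) = d_2$, i.e.\ $\alpha \notin B_2$. Thus $B_2 \subseteq B_1$; by symmetry $B_1 = B_2$, contradicting the hypothesis that the branch loci differ. The conceptually decisive step is the coupled application of Lemma~\ref{LemmaGFP}: Nevanlinna theory is exactly what allows one to extract rigid combinatorial information about $g_1$ and $g_2$ from the bare existence of a common analytic parametrization $h$. The only obstacle requiring real care is the verification that both $\phi_j$ are non-constant, which amounts to ruling out fiber-type components of the form $\{P\} \times E_2$ or $E_1 \times \{P\}$ inside $X$---a routine check, but one genuinely needed to legitimize the application of the Nevanlinna lemma.
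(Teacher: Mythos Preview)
Your argument is correct and in fact cleaner than the paper's. Both proofs begin the same way: assume a component $C$ has geometric genus at most $1$, uniformize to get $\psi\colon\C\to C$, check that both coordinate projections $\phi_j$ are non-constant, and set $h=g_1\circ\phi_1=g_2\circ\phi_2$. From there the paper takes a longer path: it picks a branch value $\beta$ of $g_2$ not in the branch locus of $g_1$, applies the Second Main Theorem (Theorem~\ref{ThmSMT}) at $\beta$ together with all branch values $\alpha_1,\dots,\alpha_m$ of $g_1$, feeds in Lemma~\ref{LemmaGFP} for each of these points, and then invokes Riemann--Hurwitz for $g_1$ to reach the numerical contradiction $m-1<m-1$. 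Your route bypasses both the Second Main Theorem and Riemann--Hurwitz entirely: applying Lemma~\ref{LemmaGFP} simultaneously to the two factorizations $h=g_1\circ\phi_1=g_2\circ\phi_2$ immediately yields $\#g_1^{-1}(\alpha)/d_1=\#g_2^{-1}(\alpha)/d_2$ for every $\alpha$, hence $B_1=B_2$. This is strictly more economical and even extracts a stronger conclusion (equality of the normalized fiber-size functions, not just of the branch loci). The paper's approach, on the other hand, illustrates how the Second Main Theorem interacts with Riemann--Hurwitz, which may be of independent expository interest. One minor remark: your separate treatment of the genus-$0$ case is harmless but unnecessary, since any morphism $\Pro^1\to E_j$ is constant, so a genus-$0$ component would force $\psi$ itself to be constant---your own fiber argument already detects this.
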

\begin{proof}
Let $C\subseteq X$ be an irreducible component and for the sake of contradiction suppose that $C$ has geometric genus $0$ or $1$. As $C\subseteq E_1\times E_2$, we see that necessarily $C$ has geometric genus $1$. Since elliptic curves can be uniformized by holomorphic functions from $\C$, we obtain a non-constant holomorphic map $\phi=(\phi_1,\phi_2)\colon \C\to C\subseteq E_1\times E_2$ where at least one of $\phi_j\colon \C\to E_j$ is non-constant. By the definition of $X$ we see that $g_1\circ \phi_1=g_2\circ \phi_2$ and we conclude that both $\phi_j$ are non-constant. 

Let $h=g_1\circ \phi_1=g_2\circ \phi_2\in \Mcal$. Since $g_1$ and $g_2$ have different branch locus, we may assume without loss of generality that there is $\beta\in \Pro^1$ which is a branch value of $g_2$ but not of $g_1$. Let $\alpha_1,\ldots,\alpha_m\in \Pro^1$ be the different branch values of $g_1$. 

By the Second Main Theorem \ref{ThmSMT} with $q=m+1$ we have
$$
N^{(1)}_h(\beta,r)+\sum_{j=1}^m N^{(1)}_h(\alpha_j,r)\ge_{exc} (m-1+o(1))T_h(r).
$$

On the other hand, Lemma \ref{LemmaGFP} gives
$$
N^{(1)}_h(\beta,r) =_{exc} \left(\frac{\# g_2^{-1}(\beta)}{d_2}+o(1)\right)T_h(r)
$$
and for each $1\le j\le m$ we similarly obtain
$$
N^{(1)}_h(\alpha_j,r) =_{exc} \left(\frac{\# g_1^{-1}(\alpha_j)}{d_1}+o(1)\right)T_h(r).
$$
Putting all of this together, we find
$$
\left(\frac{\# g_2^{-1}(\beta)}{d_2}+\sum_{j=1}^m\frac{\# g_1^{-1}(\alpha_j)}{d_1}+o(1)\right)T_h(r)\ge_{exc} (m-1+o(1))T_h(r).
$$
Letting $r\to \infty$, since $h$ is non-constant we deduce
$$
m-1\le \frac{\# g_2^{-1}(\beta)}{d_2}+\sum_{j=1}^m\frac{\# g_1^{-1}(\alpha_j)}{d_1}.
$$
Since $\beta$ is a branch value of $g_2$ we have $\# g_2^{-1}(\beta)<d_2$, hence
\begin{equation}\label{Eqn1}
m-1<1+\frac{1}{d_1}\sum_{j=1}^m \# g_1^{-1}(\alpha_j).
\end{equation}
On the other hand, the Riemann--Hurwitz formula (in the form of Lemma \ref{LemmaRH}) applied to $g_1\colon E_1\to \Pro^1$ gives
$$
0 = -2d_1 + \sum_{j=1}^m(d_1-\# g_1^{-1}(\alpha_j))
$$
from which one gets
$$
 \frac{1}{d_1}\sum_{j=1}^m \# g_1^{-1}(\alpha_j)=m-2.
$$
This contradicts the bound \eqref{Eqn1}.
\end{proof}

\section{Uniform Mordell--Lang and Manin--Mumford} \label{SecML}

The rank of an abelian group $\Gamma$, denoted by $\rk \Gamma$, is defined as the dimension over $\Q$ of the vector space $\Gamma\otimes_\Z \Q$.

After the recent works \cite{DGH, Kuhne, GGK, Yuan}, the Uniform Mordell--Lang conjecture is proved. Here we recall the version obtained in \cite{GGK} in the case of (possibly singular) curves contained in abelian varieties.

\begin{theorem}[Uniform Mordell--Lang for curves]\label{ThmML} Let $n,D\ge 1$ be integers. There is a constant $c(n,D)$ depending only on $n$ and $D$ with the following property.

Let $A$ be an abelian variety over $\C$ of dimension $n$, let $\Lcal$ be an ample line sheaf on $A$, and let $X\subseteq A$ be a $1$-dimensional Zariski closed subset with $\deg_\Lcal(X)\le D$. Let $\Gamma\le A(\C)$ be a subgroup of finite rank and let $r=\rk \Gamma$. If all irreducible components of $X$ have geometric genus at least $2$, then
$$
\# (\Gamma\cap X)\le c(n,D)^{1+r}.
$$
\end{theorem}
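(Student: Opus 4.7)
My plan, following the approach of Dimitrov--Gao--Habegger, K\"uhne, and Gao--Ge--K\"uhne, is to reduce to a single smooth curve of genus $\geq 2$ defined over a number field and then split the count according to the N\'eron--Tate height. First I would reduce to the case where $X=C$ is a smooth projective irreducible curve of genus $\geq 2$: since $\deg_\Lcal(X)\leq D$ and $\Lcal$ is ample, $X$ has at most $D$ irreducible components, so a union bound reduces to a single component $C$; replacing $C$ by its normalization $\nu\colon\tilde C\to C\subseteq A$ (which is birational onto its image) lets us assume $C$ is smooth. Standard spreading-out and specialization arguments then allow us to assume that $A$, $C$, and $\Gamma$ are defined over a fixed number field.

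Next, replacing $\Lcal$ by its symmetrization, let $\hat h$ denote the associated N\'eron--Tate height on $A$; it induces a positive-definite quadratic form on $\Gamma\otimes\R\cong\R^r$. Fix a threshold $T=T(n,D)>0$ and split $\Gamma\cap C$ into a \emph{small-height} part $S=\{P:\hat h(P)\leq T\}$ and a \emph{large-height} part $L=\{P:\hat h(P)>T\}$. For $L$, I would apply the Vojta--R\'emond angular covering: cover the unit sphere of $(\R^r,\sqrt{\hat h})$ by at most $c_1(n,D)^r$ spherical caps of aperture $\theta=\theta(n,D)$ chosen small enough that Mumford's gap principle (a consequence of Vojta's inequality for curves in abelian varieties) forces the heights of any two points of $L$ lying in the same cone to be separated by a fixed ratio. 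The resulting geometric-progression argument in each cone yields $\#L\leq c_2(n,D)^r$.

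The main obstacle is bounding $\#S$. This is where the deep \emph{height inequality} of Dimitrov--Gao--Habegger, together with its strengthenings by K\"uhne and Gao--Ge--K\"uhne (or, alternatively, Yuan's adelic line bundle variant), becomes essential: in a relative family of smooth genus-$\geq 2$ curves of bounded degree embedded in abelian varieties of dimension $n$, outside a proper Zariski-closed locus of the moduli base the fiberwise N\'eron--Tate height of a point on the curve dominates a Faltings-type height of the base point. Combined with a relative Bogomolov-type inequality and Noetherian induction on the exceptional locus, this produces a uniform bound $\#S\leq c_3(n,D)$ independent of $(A,C)$ and $\Gamma$. Adding the two contributions gives $\#(\Gamma\cap X)\leq c(n,D)^{1+r}$. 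The angular packing step is by now classical, whereas the height inequality, which rests on intricate intersection theory on mixed Shimura varieties or on Yuan's adelic line bundle machinery, is what makes the argument go through and constitutes by far the deepest input.
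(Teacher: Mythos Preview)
The paper does not prove this theorem at all: it simply records it as a consequence of Theorem~1.1 in \cite{GGK}, adding only the one-line observation that the possibly reducible case follows because $\deg_{\Lcal}$ is additive and strictly positive on components, so $X$ has at most $D$ components. In other words, Theorem~\ref{ThmML} is quoted as a black box from the literature, not re-proved.

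Your proposal, by contrast, sketches the actual content of that black box, i.e.\ the strategy of \cite{DGH,Kuhne,GGK,Yuan}. The outline you give is broadly faithful to that strategy: reduction to an irreducible smooth curve of bounded degree, spreading out to a number field, the large-height/small-height dichotomy with Vojta--R\'emond cone packing for the former and the Dimitrov--Gao--Habegger height inequality (plus K\"uhne's equidistribution/Bogomolov input and Noetherian induction) for the latter. You correctly flag the height inequality as the deep step. As a high-level summary this is accurate, though of course each of the ``standard'' steps (specialization preserving the relevant data, uniformity of the Vojta constants in the family, the precise form of the new-gap principle needed for small points) hides substantial work that a full proof would have to supply. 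For the purposes of this paper, however, none of that is needed: the authors only use Theorem~\ref{ThmML} as input, and your reduction-to-components remark already matches the sole argument the paper provides.
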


As usual, $\deg_\Lcal(X)$ is defined as the intersection number of $\Lcal$ with $X$. Theorem \ref{ThmML} follows from Theorem 1.1 in \cite{GGK}; note that here we do not require that $X$ be irreducible, but this case also follows from \emph{loc.\ cit.} because $\deg_\Lcal(X)$ is additive on $X$ and it is a strictly positive integer as $\Lcal$ is ample.

In the special case where $\Gamma$ is the full torsion subgroup of $A$ one has $r=0$ and the previous result specializes to the Uniform Manin--Mumford conjecture:

\begin{theorem}[Uniform Manin--Mumford for curves]\label{ThmMM} Let $n,D\ge 1$ be integers. There is a constant $c(n,D)$ depending only on $n$ and $D$ with the following property.

Let $A$ be an abelian variety over $\C$ of dimension $n$, let $\Lcal$ be an ample line sheaf on $A$, and let $X\subseteq A$ be a $1$-dimensional Zariski closed subset with $\deg_\Lcal(X)\le D$. Let $A[\infty]$ be the subgroup of all torsion points of $A(\C)$. If all irreducible components of $X$ have geometric genus at least $2$, then
$$
\# (A[\infty]\cap X)\le c(n,D).
$$
\end{theorem}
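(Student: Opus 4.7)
The plan is to obtain this statement as an immediate specialization of Theorem \ref{ThmML}, as suggested by the paragraph preceding the statement. The key observation is that the full torsion subgroup $A[\infty]\le A(\C)$ has rank zero: every element has finite order, so $A[\infty]\otimes_{\Z}\Q=0$ and hence $\rk A[\infty]=0$.

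Concretely, I would apply Theorem \ref{ThmML} with $\Gamma=A[\infty]$, keeping the same $n$, $D$, $\Lcal$, and $X$ as in the present statement. The hypotheses match verbatim: $A$ is an abelian variety of dimension $n$, $\Lcal$ is ample on $A$, the subset $X$ is $1$-dimensional with $\deg_\Lcal(X)\le D$, all of its irreducible components have geometric genus at least $2$, and the subgroup $\Gamma=A[\infty]$ has finite rank $r=0$. The conclusion then specializes to
\[
\#(A[\infty]\cap X)=\#(\Gamma\cap X)\le c(n,D)^{1+0}=c(n,D),
\]
so one may take the constant appearing in the present statement to be literally the same $c(n,D)$ produced by Theorem \ref{ThmML}.

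There is essentially no obstacle in this deduction: it is a formal specialization to $r=0$, and the substantive content is entirely contained in Theorem \ref{ThmML}, which in turn rests on the recent proofs of the Uniform Mordell--Lang conjecture in \cite{DGH, Kuhne, GGK, Yuan}. In particular, no part of this argument requires Nevanlinna theory, the geometric Theorem \ref{ThmGeom}, or any of the earlier material from the paper; the torsion case is recorded separately only because it suffices for Theorem \ref{ThmA} and is historically the older conjecture.
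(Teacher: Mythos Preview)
Your proposal is correct and matches the paper's approach exactly: the paper does not give a separate proof of this theorem but simply records it as the specialization of Theorem~\ref{ThmML} to $\Gamma=A[\infty]$, observing that then $r=0$. Your justification that $A[\infty]\otimes_\Z\Q=0$ (hence $r=0$) makes explicit the one point the paper leaves implicit.
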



\section{Torsion and ranks} \label{SecProofs}

In this section we prove Theorems \ref{ThmA} and \ref{ThmB}. For this, let us fix some common notation. Let $k$ be $\C$ in the case of Theorem \ref{ThmA} or a number field in the case of Theorem \ref{ThmB}. Let $E_1$ and $E_2$ be elliptic curves and $g_j\colon E_j\to \Pro^1$ be morphisms of degrees $d_j\le d$ for $j=1,2$, all defined over $k$. We assume that the branch loci of $g_1$ and $g_2$ in $\Pro^1$ are different. 

Let $G=(g_1,g_2)\colon E_1\times E_2\to \Pro^1\times \Pro^1$, let $\Delta\subseteq \Pro^1\times \Pro^1$ be the diagonal, and let $X=G^{-1}\Delta\subseteq E_1\times E_2$. We note that $X$ is the locus of geometric points $(P_1,P_2)$ in $E_1\times E_2$ satisfying $g_1(P_1)=g_2(P_2)$. 

By Lemma \ref{LemmaDeg} we have $\deg(X)\le d_1d_2(d_1+d_2)\le 2d^3$, where $\deg(X)$ is the degree with respect to the ample divisor $V_1+V_2$ as defined in Section \ref{SecGeom}. Furthermore, by Theorem \ref{ThmGeom} the (geometric) irreducible components of $X$ have geometric genus at least $2$.

\begin{proof}[Proof of Theorem \ref{ThmA}] Let $\Gamma=E_1[\infty]\times E_2[\infty]$; this is the group of torsion points of the abelian surface $E_1\times E_2$. By Theorem \ref{ThmMM} we have
$$
\#\left(\Gamma\cap X\right)\le c(2,2d^3)
$$
with $c(n,D)$ as in Theorem \ref{ThmMM}. We note that
$$
g_1(E_1[\infty])\cap g_2(E_2[\infty]) = G(\Gamma)\cap \Delta = G(\Gamma\cap X)
$$
and we obtain the result with $c_0(d)=c(2,2d^3)$.
\end{proof}

\begin{proof}[Proof of Theorem \ref{ThmB}] The proof is very similar. Let $\Gamma=E_1(k)\times E_2(k)$; by the Mordell--Weil theorem this group is finitely generated, and its rank is $r=\rk E_1(k) + \rk E_2(k)$. By Theorem \ref{ThmML} we have
$$
\#\left(\Gamma\cap X\right)\le c(2,2d^3)^{1+r}
$$
with $c(n,D)$ as in Theorem \ref{ThmML}. We note that
$$
g_1(E_1(k))\cap g_2(E_2(k)) = G(\Gamma)\cap \Delta = G(\Gamma\cap X)
$$
and we obtain the result with $\kappa(d)=1/\log c(2,2d^3)$.
\end{proof}



\section{Acknowledgments}

N.G.-F. was supported by ANID Fondecyt Regular grant 1211004 from Chile.

H.P. was supported by ANID Fondecyt Regular grant 1230507 from Chile.

We thank Fabien Pazuki for pointing out some relevant references.


\end{document}